\newcommand{\Z}{\mathbb Z}
\newcommand{\Q}{\mathbb Q}
\newcommand{\F}{\mathbb F}
\newcommand{\calC}{\mathcal C}
\newcommand{\calS}{\mathcal S}
\DeclareMathOperator{\divisor}{div}
\newtheorem{lemma}{Lemma}[section]
\newtheorem{theorem}{Theorem}
\begin{document}
\title{Squares in arithmetic progression over cubic fields}
\author{Andrew Bremner}
\address{
School of Mathematical and Statistical Sciences\\
 Arizona State University\\
 Tempe\\
 AZ 85287-1804\\
USA
}
\author{Samir Siksek}
\address{Mathematics Institute\\ 
University of Warwick\\
CV4 7AL\\
Coventry\\
UK}

\thanks{The second-named author is supported by EPSRC Programme Grant
\lq LMF: L-Functions and Modular Forms\rq\  EP/K034383/1.
}
\keywords{Arithmetic progressions, squares, cubic fields, curves,
Jacobians, Mordell--Weil}
\subjclass[2010]{Primary 11G30, 11B25}
\date{\today}

\begin{abstract}
Euler showed that there can be no more than three integer squares in arithmetic
progression. In quadratic number fields, Xarles has shown that
there can be 
arithmetic progressions of five squares, but not of six.  Here, we prove that
there are 
no cubic number fields which contain five squares in arithmetic
progression.  
\end{abstract}

\maketitle

\section{Introduction}
There are infinitely many triples of integer squares in arithmetic progression,
provided by the formulae:
\[ (m^2-2 m n-n^2)^2, \; (m^2+n^2)^2, \; (m^2+2m n-n^2)^2, \]
with common difference $4m n (m^2-n^2)$. Euler showed that it is impossible to
find four integer squares in non-trivial arithmetic progression. (Henceforth, 
by arithmetic progression, we shall always mean a non-trivial progression,
that is, one with non-zero common difference). In quadratic number fields, the 
example given by Xarles:
\[ 7^2, \; 13^2, \; 17^2, \; 409, \; 23^2 \]
shows that arithmetic progressions of squares exist with five terms.
Xarles~\cite{Xar1} shows that there are infinitely many such progressions, and 
that there cannot exist six-term arithmetic progressions of squares in quadratic fields. 
He gives examples of four squares in cubic number fields that lie in arithmetic progression, 
and implicitly shows, using a theorem of Frey, that there are only finitely many cubic fields 
that contain six squares in arithmetic progression. In this note
we prove the following result.
\begin{theorem}\label{thm:cubic}
There are no cubic number fields containing five squares in arithmetic progression.
\end{theorem}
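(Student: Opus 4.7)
The plan is to reformulate Theorem~\ref{thm:cubic} geometrically and to attack the reformulation by cubic-point Chabauty. Five squares $x_1^2,\ldots,x_5^2$ in non-trivial arithmetic progression, lying in some number field $K$, correspond to a $K$-rational point on the curve
\[
C\colon\ x_{i-1}^2-2x_i^2+x_{i+1}^2=0,\qquad i=2,3,4,
\]
a smooth complete intersection of three quadrics in $\mathbb{P}^4$; by adjunction, $C$ has genus $g=5$. Trivial progressions account for a finite set of $\mathbb{Q}$-rational points of $C$, and the theorem is equivalent to showing that $C$ has no non-trivial closed point of degree $\le 3$.

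First I would analyse the automorphism group of $C$. The sign-flips $x_i\mapsto -x_i$ and the reversal $\iota\colon(x_1,\ldots,x_5)\mapsto(x_5,\ldots,x_1)$ generate a large group acting on $C$, and various quotients yield curves of smaller genus. In particular, forgetting $x_5$ (respectively $x_1$) defines a degree-$2$ map onto the elliptic curve parameterising four squares in arithmetic progression, which by Euler has rank zero over $\mathbb{Q}$. A full isogeny decomposition of $\mathrm{Jac}(C)$ into elliptic (or low-dimensional) factors should follow from the action of these involutions and would give access to the rank of $\mathrm{Jac}(C)(\mathbb{Q})$.

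Next I would try to classify degree-$3$ points of $C$ via cubic-point Chabauty. Two natural strategies present themselves: (i) symmetric Chabauty applied directly to the Abel--Jacobi map $C^{(3)}\to\mathrm{Jac}(C)$, which succeeds when $\mathrm{rank}\,\mathrm{Jac}(C)(\mathbb{Q})\le g-3=2$; and (ii) elliptic-curve Chabauty applied to maps $\phi\colon C\to E$ produced by the quotient construction, combined with the Mordell--Weil sieve at several primes of good reduction. In approach (ii), a cubic point of $C$ over an a priori unknown field $K$ produces a point of $E$ over $K$; using several independent maps $\phi$ together with local information at carefully chosen primes should in principle pin down both $K$ and the point simultaneously.

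The main obstacle is controlling the Mordell--Weil ranks of the relevant Jacobians and elliptic quotients, particularly over the quadratic or cubic extensions of $\mathbb{Q}$ that arise naturally from the automorphism group, and then translating those rank bounds into a Chabauty inequality that actually cuts out only trivial points. Once the ranks are under control, the remaining Mordell--Weil sieve, while computationally substantial, is essentially mechanical, and should yield that the only closed points of $C$ of degree $\le 3$ are the trivial ones corresponding to constant arithmetic progressions.
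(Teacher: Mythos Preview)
Your write-up is a strategy outline, not a proof: no rank is actually computed, no Chabauty set is actually bounded, no sieve is actually run. More importantly, the two concrete strategies you propose both have real obstacles. For (i), the Jacobian of your genus-$5$ curve $C$ splits as a product of five elliptic curves of ranks $0,1,0,1,0$, so $\mathrm{rank}\,\mathrm{Jac}(C)(\mathbb{Q})=2=g-3$; symmetric Chabauty for degree-$3$ points is therefore at the borderline, and one would need to show that the $2$ available differentials actually cut the residue discs down to finitely many points, which is not automatic. For (ii), elliptic-curve Chabauty over an \emph{unknown} cubic field $K$ is not a method: one needs $K$ fixed in advance to compute $E(K)$ and to run the $p$-adic analysis, so ``using several independent maps $\phi$ together with local information \ldots\ should in principle pin down both $K$ and the point simultaneously'' is a hope rather than an algorithm.

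The paper sidesteps all of this by a move you do not make: instead of going all the way down to elliptic quotients, it takes the quotient of $C$ by the single involution $(x_1,x_2,x_3,x_4,x_5)\mapsto(x_1,-x_2,x_3,-x_4,x_5)$, obtaining a \emph{hyperelliptic genus-$3$} curve $\mathcal{C}\colon y^2=x^8+14x^4+1$. The crucial computation is that $J(\mathcal{C})(\mathbb{Q})$ has rank $0$ (in fact $J(\mathbb{Q})\cong(\mathbb{Z}/4)^2\oplus\mathbb{Z}/8$, of order $128$). With a finite Mordell--Weil group no Chabauty is needed at all: every cubic point $P$ on $\mathcal{C}$ gives an effective irreducible degree-$3$ divisor $\Delta$ with $[\Delta-3\infty_-]\in J(\mathbb{Q})$, and one simply enumerates the $128$ linear systems $|D_i+3\infty_-|$, discards those of dimension $\ge1$ (which on a hyperelliptic curve always have a base point and hence contain no irreducible divisor), and reads off the finitely many $\Delta$'s from the zero-dimensional ones. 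Exactly $16$ irreducible cubic divisors arise, and none of them pull back to cubic points on $C$. The missing idea in your proposal is precisely this intermediate genus-$3$ quotient with finite Jacobian, which converts an analytic Chabauty problem into a finite Riemann--Roch enumeration.
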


\section{Some Geometry and Arithmetic}

The equations relating five squares in arithmetic progression are as follows:
\begin{equation}
\label{5AP}
\calS: \; a^2-2 b^2+c^2=0, \qquad b^2-2c^2+d^2=0, \qquad c^2-2d^2+e^2=0.
\end{equation}
The equations \eqref{5AP} define a curve $\calS \subset \mathbb{P}^4$ of genus 5, and $\calS$ covers the
five elliptic curves obtained by \lq\lq forgetting\rq\rq\ each variable in turn, of ranks
$0,1,0,1,0$ over $\Q$, respectively; so $\calS$ has totally split Jacobian. However, we do not use
this explicitly in what follows.

The involution of $\calS$ obtained by $(a,b,c,d,e) \rightarrow (a,-b,c,-d,e)$ 
determines a quotient curve $\calC$ of genus 3 given by the equation
\begin{equation}
\label{eqn:curvequot}
\calC: \; y^2 = x^8 + 14 x^4 +1 = (x^4+2x^3+2x^2-2x+1)(x^4-2x^3+2x^2+2x+1),
\end{equation}
via the mapping
\begin{equation}
\label{eqn:quotientmap}
(x,y)= \left( \frac{e-c}{a-c}, \; \frac{4b d(a-2c+e)^2}{(a-c)^4}  \right).
\end{equation}
% {(e-c)/(a-c), 4*b*d*(a-2*c+e)^2/(a-c)^4 }
% x^4-2*x^3+2*x^2+2*x+1 = 4*b^2*(a-2*c+e)^2/(a-c)^4
% (x^4-2*x^3+2*x^2+2*x+1)/(x^4+2*x^3+2*x^2-2*x+1) = b^2/d^2
Note that \eqref{eqn:quotientmap} implies
\begin{equation}
\label{squarecond}
x^4-2 x^3+2 x^2+2 x+1 = \left( \frac{2b(a-2c+e)}{(a-c)^2} \right)^2. 
\end{equation}
% Hence $(a,b,c,d,e)=(u^2-2*u-1, \sqrt{u^4-2*u^3+2*u^2+2*u+1}, u^2+1, 
% \sqrt{u^4+2*u^3+2*u^2-2*u+1},u^2+2*u-1)$ where $u$ defining the field with the squares making sense.
%Denote the points at infinity on $\calC$ by
%\[ \infty_{+}=(1,1,0), \quad \infty_{-}=(1,-1,0), \]
%and let $J=J(\calC)$ be the Jacobian variety of $\calC$.

An arithmetic progression consisting of five squares belonging to a cubic number
field determines a
point on $\calS$ with coordinates in a cubic field (a ``cubic point")
% cubic point on $\calS$ 
and this maps to a cubic point on $\calC$---it
cannot map to a rational point on $\calC$ as the morphism \eqref{eqn:quotientmap}
has degree $2$.
We shall determine $J(\Q)$, showing that it is finite, where $J$ is the Jacobian of $\calC$,
and use this to determine the cubic points on $\calC$. This last step
is somewhat clearer if explained in slightly greater generality.
\begin{theorem}\label{thm:cubicgen3}
Let $\calC$ be a hyperelliptic curve of genus $3$ over $\Q$ with $P_0 \in \calC(\Q)$. 
Suppose that $J(\Q)$ is finite,
where $J$ is the Jacobian of $\calC$. Then $\calC$ has finitely many cubic
points.
\end{theorem}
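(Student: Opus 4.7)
The plan is to use the Abel--Jacobi embedding of the symmetric cube $\calC^{(3)}$ into the Jacobian, together with an analysis of the fibres that exploits the hyperelliptic structure. A cubic point of $\calC$ is equivalently an irreducible $\Q$-rational effective divisor of degree~$3$ on $\calC$, and each such divisor determines a $\Q$-point of $\calC^{(3)}$. Thus it suffices to show that there are only finitely many irreducible $\Q$-rational effective divisors of degree~$3$ on $\calC$.

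Using $P_0$, I would consider the Abel--Jacobi morphism $\phi \colon \calC^{(3)} \to J$ sending $D \mapsto [D - 3 P_0]$. Its fibre over a class $[D]$ is the complete linear system $|D|$, of dimension $h^0(D) - 1$. Since $J(\Q)$ is finite, the $\Q$-points of $\calC^{(3)}$ are distributed among at most $\#J(\Q)$ fibres, so it is enough to bound the number of cubic points contributed by each fibre individually.

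For $\deg D = 3$ on a curve of genus~$3$, Riemann--Roch gives $h^0(D) = 1 + h^0(K-D)$, and $h^0(K-D) \le 1$ because $\deg(K - D) = 1$; hence each fibre is either a single point or a $\mathbb{P}^1$. A zero-dimensional fibre contains only one $\Q$-rational divisor and so contributes at most one cubic point. In the positive-dimensional case $h^0(K-D) = 1$, the class $[K-D]$ has a unique effective representative $R$; since $[K-D]$ is $\Q$-rational so is $R$, i.e.\ $R \in \calC(\Q)$. Writing $\iota$ for the hyperelliptic involution and using the canonical class identity $K \sim 2 g^1_2$, one checks that every divisor in $|D|$ has the form $\iota(R) + F$ with $F \in |g^1_2|$, so $\iota(R) \in \calC(\Q)$ is a fixed base point of $|D|$. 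Hence every $\Q$-rational divisor in $|D|$ has the rational point $\iota(R)$ as a component, is therefore reducible, and cannot be a cubic point.

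Combining the two cases, each fibre of $\phi$ contributes at most one cubic point of $\calC$, so the cubic points inject into a set of size at most $\#J(\Q) < \infty$. The principal obstacle is the analysis of the positive-dimensional fibres: the hyperellipticity and the specific genus~$3$ hypothesis are essential here, since they force any such fibre to share a common rational base point and thereby rule out irreducible $\Q$-rational divisors. Without either hypothesis, a positive-dimensional fibre could in principle contain an infinite family of cubic points.
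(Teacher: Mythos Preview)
Your argument is correct and follows essentially the same strategy as the paper's proof: both translate cubic points into irreducible rational degree-$3$ divisors, use Abel--Jacobi to reduce to finitely many fibres, and eliminate the positive-dimensional fibres by showing that the hyperelliptic structure forces a base point (the paper invokes Clifford's inequality and \cite{ACGH} for this step, whereas you identify the base point explicitly as $\iota(R)$). Your direct identification has the minor advantage of making the $\Q$-rationality of the base point transparent.
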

\begin{proof}
For a divisor $D$ on $\calC$, we write $[D]$ for its linear
equivalence class, $L(D)$ for its Riemann--Roch space, $\ell(D)$ 
for the dimension of $L(D)$,
and $\lvert D \rvert$ for the linear system of $D$. 
Recall the closed points of $\lvert D \rvert$ correspond
to effective divisors linearly equivalent to $D$, and that
$\dim \lvert D \rvert=\ell(D)-1$.

Let $D_1,\dotsc,D_{n}$
be degree $0$ divisors such that $[D_1],\dotsc,[D_{n}]$ represent
the distinct elements of $J(\Q)$. Let $D_i^\prime=D_i+3 P_0$.
Suppose $P$ is a cubic point, and let $\Delta$ be the 
effective irreducible degree $3$
divisor obtained by taking the formal sum of conjugates of $P$.
Then $\Delta \sim D_i^\prime$ for a unique $i$, and in particular,
$\Delta \in \lvert D_i^\prime \rvert$. As the divisors
$D_i^\prime$ have degree $3$, and $\calC$ has genus $3$,
it follows from Riemann--Roch and Clifford's inequality that
$\ell(D_i^\prime)=1$ or $2$. Moreover, if $\ell(D_i^\prime)=2$ then
$D_i^\prime$ is a special divisor of degree $3$;
it is known in this case (c.f.\ \cite[Chapter 1, Exercise D.9]{ACGH}), as $\calC$ is hyperelliptic
of genus $3$, that the linear system $\lvert D_i^\prime \rvert$
contains a base point and so does not contain
irreducible divisors.

To sum up $\Delta \in \lvert D_i^\prime \rvert$ for some $1 \le i \le n$ 
such that $\ell(D_i^\prime)=1$. But 
if $\ell(D_i^\prime)=1$ then $\dim \lvert D_i^\prime \rvert=0$, and
in that case $\lvert D_i^\prime \rvert$
consists of exactly a single divisor.
\end{proof}

The proof of Theorem~\ref{thm:cubicgen3} in fact gives a succinct recipe
for determining the cubic points on a hyperelliptic genus $3$ curve,
provided that the Mordell--Weil group of its Jacobian can be computed,
and is finite. We now do this for the curve $\calC$ in \eqref{eqn:curvequot}.
\begin{lemma}\label{lem:MW}
Let $\calC$ be as in \eqref{eqn:curvequot}, and let $J$ 
be its Jacobian. Let $\infty_+$ and $\infty_-$
be the two points at infinity on the model \eqref{eqn:curvequot}
where the function $y/x^4$ respectively takes values $1$ and $-1$.
Write
\[ 
Q_1= \infty_{+}-\infty_{-}, \quad Q_2=(0,-1)-\infty_{-}, \quad Q_3=(-1,-4)-\infty_{-}. 
\]
Then
\begin{equation}\label{eqn:MW}
J(\Q) = \frac{\Z}{4\Z} [Q_1] \oplus \frac{\Z}{4\Z}[Q_2] \oplus \frac{\Z}{8\Z}[Q_3].
\end{equation}
In particular,
$\# J(\Q)=128$.
\end{lemma}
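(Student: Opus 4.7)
The plan is to sandwich $J(\Q)$ between matching upper and lower bounds of $128$ elements: the upper bound coming from reduction modulo primes, the lower bound from the three explicit classes $Q_1,Q_2,Q_3$, after separately confirming that $J(\Q)$ has rank zero.

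For the upper bound on $\#J(\Q)_{\mathrm{tors}}$, note that $x^8+14x^4+1$ has discriminant supported only at $2$, so $\calC$ has good reduction at every odd prime $p$. For small odd primes $p=3,5,7,\dots$ I would count $\#\calC(\F_{p^k})$ for $k=1,2,3$, recover the characteristic polynomial of Frobenius acting on $J$, and deduce $\#J(\F_p)$. Reduction modulo $p$ gives injections $J(\Q)_{\mathrm{tors}}\hookrightarrow J(\F_p)$, so the $\gcd$ of the resulting orders bounds $\#J(\Q)_{\mathrm{tors}}$; I expect this $\gcd$ to stabilize at $128$ after one or two primes. For the lower bound, I would confirm that $Q_1,Q_2,Q_3\in J(\Q)$ (immediate from the rationality of all six points involved), then use Cantor's algorithm on their Mumford representations to verify that $4[Q_1]=4[Q_2]=8[Q_3]=0$ while $2[Q_1]$, $2[Q_2]$ and $4[Q_3]$ are nonzero, and finally that these three $2$-torsion classes are $\F_2$-linearly independent inside $J(\Q)[2]$. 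That exhibits a subgroup of the shape $\Z/4\Z \oplus \Z/4\Z \oplus \Z/8\Z$ of order exactly $128$, matching the upper bound.

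The remaining—and main—obstacle is to show that $J(\Q)$ has rank $0$. I would carry out a $2$-descent, exploiting the factorization $x^8+14x^4+1 = f_1(x)f_2(x)$ displayed in \eqref{eqn:curvequot}, where both quartics are (presumably) irreducible over $\Q$. Writing $L = \Q[T]/(f_1)\times\Q[T]/(f_2)$, one has the standard injection of $J(\Q)/2J(\Q)$ into an appropriate kernel inside $L^{*}/(L^{*})^2 \Q^{*}$ cut out by a norm condition. I would compute the class groups and unit groups of the two quartic fields $L_i$, then pin down the local image of the descent map at the unique bad prime $p=2$, with the goal of showing that the resulting fake $2$-Selmer group has $\F_2$-dimension exactly $3$, matching $\dim_{\F_2} J(\Q)[2]=3$ already produced in the lower-bound step. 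This forces the rank to be $0$, so $J(\Q)=J(\Q)_{\mathrm{tors}}$, and combining with the two previous steps pins it down as the group described in \eqref{eqn:MW}.

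The hardest piece is expected to be the local analysis at $2$: the standard "everywhere good reduction" shortcut is unavailable, and the two quartic fields $L_1,L_2$ may ramify wildly at $2$, so controlling the local image there (together with computing $2$-parts of class groups and deciding which units are everywhere-locally squares) is where all the arithmetic subtlety sits. If any of these dimensions refuses to collapse to the expected value, one can fall back on a slightly finer descent using the full isogeny structure coming from the totally split Jacobian mentioned earlier in the excerpt, but I would attempt the direct $2$-descent first.
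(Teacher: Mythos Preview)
Your overall architecture --- exhibit an explicit subgroup of order $128$, bound the torsion from above by reduction, and kill the rank by $2$-descent --- matches the paper's. One small slip: modulo $3$ one has $x^8+14x^4+1\equiv (x^4+1)^2$, so $\calC$ has bad reduction at $3$ as well as at $2$, and your reduction argument must skip $p=3$. More significantly, your torsion upper bound rests on the expectation that $\gcd_p \#J(\F_p)$ reaches $128$ after one or two primes; the paper computes $\#J(\F_5)=512$, so a single prime already fails, and rather than try further primes they argue structurally. Writing $I=\langle [Q_1],[Q_2],[Q_3]\rangle$, they verify that the reduction map $\pi$ at $5$ induces an injection $\pi(I)/2\pi(I)\hookrightarrow J(\F_5)/2J(\F_5)$; hence any hypothetical $P\in J(\Q)_{\mathrm{tors}}\setminus I$ with $2P\in I$ would satisfy $2P=2Q$ for some $Q\in I$, forcing $P-Q\in J(\Q)[2]\setminus I$. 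They then prove a standalone lemma classifying the rational $2$-torsion on even-degree hyperelliptic curves of odd genus with two rational points at infinity, and use it to confirm that $J(\Q)[2]\subset I$. This route is guaranteed to close the gap between $128$ and $512$, whereas your gcd need not in principle ever drop to $128$. For the rank, the paper simply invokes {\tt MAGMA}'s implementation of Stoll's $2$-descent to obtain a $2$-Selmer group of order $8$, which is exactly your by-hand plan carried out by machine.
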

\begin{proof}
We first determine the torsion subgroup $J(\Q)_{\mathrm{tors}}$ of $J(\Q)$ 
and then show that it is equal to $J(\Q)$.
Let $Q_1$, $Q_2$, $Q_3$
be the degree $0$ divisors in the statement of the lemma; their classes in $J(\Q)$
respectively have orders $4$, $4$, $8$. Now $5$ is a prime of good reduction
for the model $\calC$, and so the reduction modulo $5$ map
\[
\pi : J(\Q)_{\mathrm{tors}} \rightarrow J(\F_5)
\]
is an injection \cite[Appendix]{Katz}. However, $\# J(\F_5) =512$,
thus $\# J(\Q)_{\mathrm{tors}}  \mid 512$.
Let $I$ be 
the subgroup of $J(\Q)$ spanned by $[Q_1]$, $[Q_2]$, $[Q_3]$.
By explicitly computing $\pi(I)$, we find that $I$ is precisely
the group on the right-hand side of~\eqref{eqn:MW}. 
Suppose $I$ is strictly contained in $J(\Q)_{\mathrm{tors}}$. 
Then there is some $P \in J(\Q)_\mathrm{tors}\backslash I$ 
such that $2P \in I$. However,
$\pi(I)/2\pi(I)$ injects into $J(\F_5)/2J(\F_5)$. Thus there is some
$Q \in I$ such that $2P=2Q$, and so $P-Q$ is
an element of order $2$.
To show that 
$J(\Q)_\mathrm{tors}=I$ it is now enough to verify that
$I$ contains all the $2$-torsion in  $J(\Q)$. We did this with the help
of Lemma~\ref{lem:2tors} below; thus $J(\Q)_\mathrm{tors}=I$. 

A computation with {\tt MAGMA}~\cite{Mag} confirms that  the $2$-Selmer
group of $J$ 
over $\Q$ is isomorphic to $(\Z/2\Z)^3$;  for this {\tt MAGMA} uses
Stoll's implementation of the $2$-descent algorithm described in \cite{Stoll}.
As $J(\Q)_{\mathrm{tors}}/2 J(\Q)_{\mathrm{tors}} \cong (\Z/2\Z)^3$
we have that the rank of $J(\Q)$ is $0$. This completes the proof.
\end{proof}

We are unable to find a description in the literature for $2$-torsion
on hyperelliptic curves of odd genus, so we give one 
here, restricting to the case where there are two rational
points at infinity.
\begin{lemma}\label{lem:2tors}
Let $K$ be a field with characteristic $\ne 2$.
Let $g\ge 3$ be an {\bf odd} integer, and
\[
f=x^{2g+2}+a_{2g+1} x^{2g+1}+\cdots+a_0 \in K[x]
\] 
be a monic separable polynomial of degree $2g+2 \equiv 0 \pmod{4}$.
Let $\calC$ be the genus $g$ hyperelliptic curve with affine model
$y^2=f(x)$
and let $J$ be its Jacobian. 
A non-trivial element of order $2$ in $J(K)$
can be represented by a divisor $D$ such that 
\begin{enumerate} 
\item[(i)] either $2D=\divisor(h)$ for some $h(x) \in K[x]$ of even degree 
$\leq (g-1)$
satisfying $h(x) \mid f(x)$, 
\item[(ii)] or $2D=\divisor(y-h_1(x))$ for some $h_1$, $h_2 \in K[x]$,
where $h_1=x^{g+1}+\frac{1}{2} a_{2g+1} x^g+\cdots$, $h_2$
has degree $\le g$ and $a \in K^*$
such that $f=h_1^2-a h_2^2$. 
\end{enumerate}
\end{lemma}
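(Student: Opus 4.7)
My plan is to describe $J[2]$ over $\bar K$ combinatorially in terms of Weierstrass points, then read off which $K$-rational 2-torsion classes fall into case~(i) and which into case~(ii).

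Let $T=\{r_1,\dots,r_{2g+2}\}$ be the roots of $f$, with corresponding Weierstrass points $W_i=(r_i,0)$. For each even-cardinality subset $S\subseteq T$, set $h_S(x)=\prod_{r\in S}(x-r)$ and
\[
D_S=\sum_{r\in S}W_r-\tfrac{|S|}{2}(\infty_{+}+\infty_{-}).
\]
Then $2D_S=\divisor(h_S)$, so $[D_S]\in J(\bar K)[2]$; the relation $D_S+D_{T\setminus S}=\divisor(y)$ gives $[D_S]=[D_{T\setminus S}]$. It is standard that every non-trivial element of $J(\bar K)[2]$ arises this way, with $S$ uniquely determined up to complementation.

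Given non-trivial $[D]\in J(K)[2]$, choose such an $S$ with $[D]=[D_S]$. Galois invariance forces $\{S,T\setminus S\}$ to be $G_K$-stable, and I split into two cases. Case A: $S$ itself is $G_K$-stable. Then $h_S\in K[x]$ divides $f$; if $|S|\neq g+1$, by swapping $S$ with $T\setminus S$ if necessary I may assume $|S|\leq g$, and since $|S|$ is even and $g$ is odd, $|S|\leq g-1$. Taking $h=h_S$, $D=D_S$ produces case~(i). Case B: either some $\sigma\in G_K$ swaps $S$ with $T\setminus S$ (so $|S|=g+1$ and the stabilizer of $S$ cuts out a quadratic extension $L=K(\sqrt{a})$), or $S$ is $G_K$-stable with $|S|=g+1$ (in which case set $L=K$, $a=1$). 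In both situations $f=h_S\,h_{T\setminus S}$ over $L$; writing $h_S=h_1+\sqrt{a}\,h_2$ and $h_{T\setminus S}=h_1-\sqrt{a}\,h_2$ with $h_1,h_2\in K[x]$ yields $f=h_1^2-ah_2^2$. A short calculation with Vieta's formulas ($\sum_i r_i=-a_{2g+1}$) gives $h_1=x^{g+1}+\tfrac12 a_{2g+1}x^g+\cdots$ and $\deg h_2\leq g$.

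To match $[D_S]$ with $y-h_1$, the key identity, using $h_S\,h_{T\setminus S}=y^2$ and $h_S+h_{T\setminus S}=2h_1$, is
\[
(h_S-y)^2 = h_S^2-2yh_S+y^2 = h_S(h_S+h_{T\setminus S}-2y) = -2h_S(y-h_1).
\]
Thus $(y-h_1)/h_S=-\tfrac12(1-y/h_S)^2$ in $L(\calC)^*$, so
\[
\divisor(y-h_1) = \divisor(h_S) + 2\,\divisor(1-y/h_S) = 2D_S + 2\,\divisor(1-y/h_S).
\]
All multiplicities are even, so $D:=\tfrac12\divisor(y-h_1)$ is a well-defined divisor with $[D]=[D_S]$ in $J(L)$, hence in $J(K)$. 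It is $K$-rational since $y-h_1\in K(\calC)^*$, placing us in case~(ii). The main obstacle will be Case B: the combinatorial representative $D_S$ naturally lives over the quadratic extension $L$, and one needs an explicit $K$-rational substitute; the identity above finesses this by realizing $(y-h_1)/h_S$ as a constant times a square in $L(\calC)^*$, so that $\divisor(y-h_1)$ and $2D_S$ differ precisely by the even principal divisor $2\,\divisor(1-y/h_S)$.
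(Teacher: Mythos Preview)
Your argument is correct but follows a genuinely different route from the paper's. The paper never invokes the Weierstrass-point description of $J[2]$; instead it uses Riemann--Roch to move the given $2$-torsion class into the form $D=D_0-\tfrac{g-1}{2}\infty_+-\tfrac{g+1}{2}\infty_-$ with $D_0\ge 0$ of degree $g$, so that $2D=\divisor(k)$ for some $k\in L\bigl((g-1)\infty_++(g+1)\infty_-\bigr)$. An explicit basis of that Riemann--Roch space forces $k$ to be either a polynomial $h(x)$ of degree $\le g-1$ or of the shape $y-h_1(x)$ with the stated leading terms, and a short local analysis at the Weierstrass points and at infinity then yields $h\mid f$ with $\deg h$ even in the first case, and $f-h_1^2=ah_2^2$ in the second. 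Your approach, by contrast, starts from the combinatorics of even subsets $S\subseteq T$, reads the $(i)/(ii)$ dichotomy directly as ``$|S|\le g-1$ with $S$ Galois-stable'' versus ``$|S|=g+1$'', and handles the latter via the neat identity $(h_S-y)^2=-2h_S(y-h_1)$ to produce a $K$-rational representative even when $h_S$ only lives over a quadratic extension. The paper's argument is shorter and more uniform, avoiding the passage through $L=K(\sqrt{a})$; yours is more structural, making transparent exactly which $2$-torsion classes land in which case and giving an explicit recipe for $h$ or $(h_1,h_2,a)$ from the partition of the Weierstrass locus.
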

\begin{proof}
Write $\infty_+$ and $\infty_{-}$ for the usual two points at infinity.
Let $D$ be a rational degree $0$ divisor such that $[D]$ is $2$-torsion. By
Riemann--Roch we may replace $D$ by a linearly equivalent divisor
\[
D=D_0-\frac{(g-1)}{2} \infty_{+} - \frac{(g+1)}{2} \infty_{-}
\]
 where $D_0$ is positive of degree $g$.
Then $2 D_0 -(g-1) \infty_{+} -(g+1) \infty_{-}$ is a principal divisor, say 
equal to $\divisor(k)$ for some  $k \in K(C)\backslash K$.
Thus $k$ belongs to the Riemann--Roch space $L((g-1) \infty_{+} + (g+1) \infty_{-})$ which has basis
\[
1,\; x,\; x^2,\dots,\; x^{g-1},\; y-x^{g+1}-\frac{a_{2g+1}}{2} x^g.
\]
After replacing $k$ by a scalar multiple, $k$ either has the form
$k=h(x)$  where $h(x) \in K[x]$ has degree $\le g-1$,
or $k=y-h_1(x)$ with $h_1=x^{g+1}+\frac{1}{2} a_{2g+1} x^g+\cdots \in K[x]$.

Suppose first that $k=h(x)$. If $h=h_1^2 h_2$ with $\deg(h_1) \ge 1$, then
we replace $D$ by the linearly equivalent $D-\divisor(h_1)$. Thus we may
suppose $h$ is square-free. If $\alpha \in \overline{K}$ is a root of $h$ that is not
a root of $f$ then $k$ has a zero of order $1$ at 
the points $(\alpha,\pm \sqrt{f(\alpha)})$, which is
impossible. We see that $h(x) \mid f(x)$. Finally, $h$ must have even degree
so that the valuations at the poles at infinity are even.

Next suppose that $k=y-h_1(x)$. The divisor of zeros of $k$ has the form
 $2 D_1$ where $D_1$ is positive. The divisor of zeros of $k^\prime=y+h(x)$
has the form $2 D_1^\prime$ where $D_1^\prime$ is the image of
$D_1$ under the hyperelliptic involution. 
Now if $\alpha \in \overline{K}$
is a common root of $h_1$ and $f$ then $k=y-h_1(x)$ has valuation $1$ at
$(\alpha,0)$ which is impossible. In particular, $D_1$, $D_1^\prime$
do not contain Weierstrass points.
It is easy to deduce that
\[
f(x)-h_1(x)^2=(y+h_1(x))(y-h_1(x))
\]
has the form $a h_2(x)^2$ for some $a \in K^*$ and $h_2 \in K[x]$.
Clearly the degree of $h_2$ is at most $g$.
\end{proof}

%Suppose now there exists a point $P$ on the curve $\calS$ at \eqref{5AP} defined over 
%a cubic number field $K$. The point $P$ together with its conjugate points over $K$ map
%via \eqref{eqn:quotientmap} to a triple of conjugate points on $\calC$;
%and these in turn induce an effective divisor $\Delta$ of degree $3$ on $\calC$.

%The $3$-fold symmetric product ${\mbox Sym}^3(\calC)$ of $\calC$ is a variety
%birationally equivalent to $J(\calC)$ and whose points correspond to effective
%divisors on $\calC$ of degree $3$.  

\section{Proof of Theorem~\ref{thm:cubic}}

Let $\calS$ and $\calC$ be as in \eqref{5AP} and \eqref{eqn:curvequot}.
It is sufficient to determine the cubic points on $\calC$ and verify
that none of these pull back to cubic points on $\calS$.
Let $P$ be a cubic point on $\calC$ and let $\Delta$ be the
irreducible effective degree $3$ divisor obtained by taking the formal
sum of the conjugates of $P$. To determine the possibilities for $\Delta$
we follow the strategy of the proof of Theorem~\ref{thm:cubicgen3}.
Let $D_1,\dotsc,D_{128}$ be degree $0$ divisors on $\calC$ representing the $128$
elements of $J(\Q)$ (by Lemma~\ref{lem:MW}, these
can be expressed as linear combinations of $Q_1$, $Q_2$, $Q_3$).
Let $D_i^\prime=D_i+3 \infty_-$. By the final paragraph of the 
proof of Theorem~\ref{thm:cubicgen3} we know that $\Delta$ is the unique rational
divisor in $\lvert D_i^\prime \rvert$ for some $i$ such that $\ell(D_i^\prime)=1$.
We find that there are precisely $120$ values of $i$ for which $\ell(D_i^\prime)=1$;
if $f_i$ is a generator for $L(D_i^\prime)$ then the unique rational
divisor in $\lvert D_i^\prime \rvert$ is given by $D_i^\prime+\divisor(f_i)$.
It turns out only $16$ of the $120$ effective divisors $D_i^\prime+\divisor(f_i)$
are irreducible. This yields $16$ cubic points on $\calC$ up to conjugation,
and we verify that none of these pull back to cubic points on $\calS$.
%Using {\tt MAGMA}, we compute a set $S_3$ of
%$128$ effective rational divisors of degree $3$ on $\calC$, such that the map 
%$S_3 \rightarrow J(\Q)$ given by $D \rightarrow D-3\infty_{-}$ is a bijection.  
%It follows that every rational divisor of degree $3$ on $\calC$ is linearly 
%equivalent to precisely one divisor in $S_3$. Thus there exists $D \in S_3$ 
%with $\Delta \sim D$, so that $\Delta = D + \mbox{div}(f)$ for
%some function $f$ on $\calC$, where $f$ lies in the Riemann-Roch space $L(D)$.
%For $8$ of the elements $D \in S_3$, the dimension of $L(D)$ is computed to equal $2$,
%with basis of $L(D)$ in each instance given by $\{1,x\}$, so that an effective divisor 
%linearly equivalent to $D$ is reducible. We must therefore have $L(D)$ of dimension $1$,
%so that $f$ is a constant function, and $D=\Delta$ is an effective irreducible divisor
%of degree $3$. There are precisely $16$ elements of $S_3$ of this type. 
As an example,
$(\theta, 2\theta^2+\theta-1)$ is a cubic point on $\calC$, where $\theta^3-2\theta^2+2\theta+1=0$.
%However, each of these $16$ elements pulls back to an irreducible divisor of degree $6$ on $\calS$.
This point
% The point above for instance
pulls back to
\[(a,b,c,d,e)=(-\phi^4+2\phi^2+2, \; \phi, \; \phi^4-2\phi^2, \; 2\phi^5-4\phi^3-3\phi, \; -\phi^4+4\phi^2), \]
% (-u^4 + 2*u^2 + 2, u, u^4 - 2*u^2, 2*u^5 - 4*u^3 - 3*u, -u^4 + 4*u^2)
where 
\[ \phi^6 - 2\phi^4 - 2\phi^2 - 1 = 0  \qquad (\phi^2=-1/\theta).  \]

\bigskip

\noindent{\textbf{Remark.}} 
A similar argument determines all rational points on $\calC$, and all points on $\calC$ that are 
defined over quadratic extensions of $\Q$. This latter in particular yields an alternative proof of
a result of Gonzal\'ez-Jim\'enez \& Xarles related to five squares in arithmetic progression in quadratic fields. 
We sketch details, as follows.  

Suppose $P \in \calC(\Q)$. Then $P-\infty_{-} \sim D_i$ for a unique $i$, so that $P \sim D_{i,1}$ where
we denote by $D_{i,j}$ the divisor $D_i+j \cdot \infty_{-}$. Thus $P \in |D_{i,1}|$, a space of
dimension $\ell(D_{i,1})-1$.  Now $\ell=\ell(D_{i,1})$ takes values $0,1$, with $|D_{i,1}|$ empty when 
$\ell=0$ (in $120$ instances). For $\ell=1$ (occurring in $8$ instances), $|D_{i,1}|$ is of dimension $0$,
and comprises a single divisor, namely $D_{i,1}+(f)$, where $f$ is a generator for $L(D_{i,1})$. 
The eight instances where $\ell=1$ now correspond to the eight rational points $(1,\pm 1, 0)$,
$(0, \pm 1, 1)$, and $(\pm 1, \pm 4)$ on $\calC$. 

Suppose now $P$ is a quadratic point, and let $\Delta$ be the effective irreducible divisor of degree $2$ 
obtained by taking the formal sum of $P$ with its conjugate. Then $\Delta \sim D_{i,2}$ for a unique $i$, 
and $\Delta \in |D_{i,2}|$.  For $1 \leq i \leq 128$, $\ell=\ell(D_{i,2})$ takes the values $0,1,2$. 
If $\ell=0$ (in $93$ cases), then $|D_{i,2}|$ is empty.  If $\ell=1$ (in $34$ cases), then $\dim|D_{i,2}|=0$, 
and the linear system comprises the single divisor $D_{i,2}+(f) (= \Delta)$ where $f$ generates $L(D_{i,2})$. 
In all but two of these instances, $\Delta$ is reducible; the cases where $\Delta$ is irreducible are when
$D_i=3 Q_2+4 Q_3$ and $D_i=2 Q_1+Q_2+4 Q_3$, corresponding to $\Delta=(i,4)+(-i,4)$, and $(i,-4)+(-i,-4)$. 
And when $\ell=2$, which occurs precisely when $D_i=Q_1$, then $L(D_{i,2})$ has basis$\{1,x\}$, and the 
linear space $|D_{i,2}|$ comprises divisors of type $(t,\sqrt{D})+(t,-\sqrt{D})$ for $t \in \Q$, 
$D$ not a rational square.  In conclusion, $\Delta$ is thus either fixed under the hyperelliptic involution, 
or is one of the pairs $(\pm i,4)+(\mp i,4)$.

The pullback to $\calS$ of the point $(i,4)$ is the point 
\[
(a,b,c,d,e)=(\sqrt{2}, 1, 0, i, i\sqrt{2}),
\]
so does not provide a quadratic point of $\calS$. The pullback of the point $(t,\sqrt{D})$ for $t \in \Q$
is the point $(a,b,c,d,e)=$
\[
(t^2-2t-1, \sqrt{t^4-2t^3+2t^2+2t+1}, t^2+1, 
\sqrt{t^4+2t^3+2t^2-2t+1}, t^2+2t-1),
\]
where $D=t^8+14t^4+1= (t^4-2t^3+2t^2+2t+1)(t^4+2t^3+2t^2-2t+1)$. For the point to be defined
over $\Q(\sqrt{D})$ it is necessary and sufficient that precisely one of $t^4 \mp 2t^3+2t^2 \pm 2t+1$ 
be a rational square, the other a non-square.
This recovers Lemma 4 of Gonzal\'ez-Jim\'enez \& Xarles~\cite{GJX}, that an arithmetic progression of five squares 
in a quadratic field is without loss of generality of the form $(x_0^2,x_1^2,x_2^2,D x_3^2,x_4^2)$ 
for rational integers $x_i$ and $D$.


\begin{thebibliography}{100}

\bibitem{ACGH}
E.\ Arbarello,
M.\ Cornalba,
P.\ A.\ Griffiths,
J.\ Harris,
{\em Geometry of Algebraic Curves, Vol.\ I},
Springer Verlag, 1985.

\bibitem{Mag} W.\ Bosma, J.\ Cannon and C.\ Playoust: {\em The Magma
Algebra System I: The User Language}, J.\ Symb.\ Comp.\ {\bf 24} (1997),
235--265. (See also {\tt http://magma.maths.usyd.edu.au/magma/})

% \bibitem{Bre} A. Bremner, {\em Some special curves of genus 5}, Acta Arithmetica 74.1 (1997) 41--51.
% \bibitem{Bru} N. Bruin, {\em The arithmetic of Prym varieties in genus 3}, Compositio Math. 144 (2008) 317--338.

%\bibitem{Cassels}
%J.\ W.\ S.\ Cassels,
%{\em Lectures on Elliptic Curves},
%LMS Student Texts, Cambridge University Press, 1991.

%\bibitem{Ca.d} J.W.S. Cassels, {\em On a Theorem of Dem'Janenko},
%J. London Math. Soc. {\bf 43} (1968), 61--66.

%\bibitem{CPS}
%J.\ E.\ Cremona, 
%M.\ Prickett
%and S.\ Siksek,
%{\em Height difference bounds for elliptic curves over number fields}, 
%Journal of Number Theory \textbf{116} (2006), 42--68.

%\bibitem{Dm1} V. Dem'yanenko, 
%{\em Rational Points on a Class of Algebraic Curves}, 
%Amer.\ Math.\ Soc.\ Transl.\ \textbf{66} (1968), 246--272.

%\bibitem{Dm2} V. Dem'yanenko, 
%{\em The Indeterminante Equations $x^6+y^6=az^2$,
% $x^6+y^6=az^3$, $x^4+y^4=az^4$}, Amer.\ Math.\ Soc.\ Transl.\ (2) 
%\textbf{119}, 1983.

\bibitem{GJX} E.\ Gonzal\'ez-Jim\'enez and X.\ Xarles, {\em Five squares in arithmetic progression over quadratic fields}, Rev. Mat. Iberoam. 29 (2013), no. 4, 1211–-1238. 

\bibitem{Katz} N.\ M.\ Katz,
{\em Galois properties of torsion points on abelian varieties},
Invent.\ Math.\ \textbf{62} (1981), 481--502.

% \bibitem{Rem} G. R\'emond, {\em Borne polynomiale pour le nombre de points rationnels des courbes}, J. Th\'eor. Nombres Bordeaux 23 (2011), no. 1, 251--255

%\bibitem{Silverman} J. H. Silverman, {\em Rational Points on Families of Curves
%of Genus at Least $2$}, 
%Proc.\ London Math.\ Soc.\ (3) {\bf 55} (1987), 465--481.

\bibitem{Stoll} M.\ Stoll,
{\em Implementing 2-descent for Jacobians of hyperelliptic curves},
Acta Arith.\ {\bf 98} (2001), 245--277.

\bibitem{Xar1} X. Xarles, Squares in arithmetic progression over number fields, J. Number Theory 132 (2012) 379--389.
\end{thebibliography}
\end{document}